\newcommand{\C}{\mathbb{C}}
\newcommand{\R}{\mathbb{R}}
\newcommand{\Z}{\mathbb{Z}}
\renewcommand{\P}{\mathbb{P}}
\numberwithin{equation}{section}
\newtheorem{theorem}[equation]{Theorem}
\newtheorem*{theorem*}{Theorem}
\newtheorem{corollary}[equation]{Corollary}
\newtheorem{proposition}[equation]{Proposition}
\newtheorem{definition}[equation]{Definition}
\theoremstyle{remark}
\newtheorem{remark}[equation]{Remark}
\def\subsection{
	\refstepcounter{equation}
	\setcounter{subsection}{\arabic{equation}}
	\noindent \arabic{section}.\arabic{subsection}
}
\newcommand{\F}{\mathcal{F}}
\newcommand{\M}{\mathcal{M}}
\newcommand{\OO}{\mathcal{O}}
\newcommand{\bk}{\bar{k}}
\newcommand{\CC}{\tilde{C}}
\begin{document}

\title{Rationality of moduli spaces of stable bundles on curves over $\R$.}

\begin{abstract}
Let $C$ be a smooth, projective, geometrically irreducible 
curve defined over $\R$ such that $C(\R) = \emptyset$.
Let $r>0$ and $d$ be integers which are coprime. 
Let $L$ be a line bundle on $C$ which corresponds to an $\R$ 
point of ${\rm Pic}^d_{C/\R}$.
Let $\M_{r,L}$ be the moduli space of stable bundles on 
the complexification of $C$ of rank $r$ 
and determinant $L$. We classify birational 
types of $\M_{r,L}$ over $\R$. 
\end{abstract}

\author{Souradeep Majumder \ and  \ Ronnie Sebastian}

\address{Souradeep Majumder: Department of Mathematics, Indian Institute of Science Education and Research (IISER) Tirupati \\ Andhra Pradesh-517507, India}
\email{souradeep@iisertirupati.ac.in}

\address{Ronnie Sebastian:
Department of Mathematics\\ Indian Institute of Technology\\ Powai, Mumbai-400076, India.}
\email{ronnie@math.iitb.ac.in}

\maketitle

\section{Introduction\ \footnote{This version of the article is slightly different from the published version.}}

Let $C$ be a smooth projective curve defined 
over an algebraically closed field $\bk$. For 
a pair of integers $(r,d)$, with $r>0$, let $\M_{r,d}$ denote 
the moduli space parameterizing rank $r$, degree $d$ semistable vector bundles on $C$. 
It is interesting to study the rationality properties of these 
moduli spaces. Let $L$ denote a line bundle on $C$ of degree $d$. 
It was proved in \cite{ksh} that when $r$ and $d$ 
are coprime, the moduli space $\M_{r,L}$ is rational over $\bk$.
It is an open problem to decide whether or not 
$\M_{r,L}$ is rational when the rank and degree are not coprime.

In \cite{Ho} the author works with an infinite base field $k$,
not necessarily algebraically closed. 
Let $L$ be a line bundle corresponding to a $k$ point 
of ${\rm Pic}^d_{C/k}$. Then the moduli space $\M_{r,L}$ is 
a variety defined over $k$.
Under the additional hypothesis, that the curve $C$ has a $k$ rational point,
it is shown that the moduli space $\M_{r,L}$ is rational as a variety over 
$k$, see \cite[Theorem 6.1, Corollary 6.2]{Ho}.

In this article we consider the situation when the 
curve $C$ is defined over $\R$. Several authors have studied 
questions related to moduli spaces in this situation. 
We refer the reader to the introduction 
in \cite{biswas-huisman-hurtubise} and \cite{Sch}.
In \cite{Sch} the author studies the topology of $\M_{r,d}(\R)$.
We consider the following rationality problem. 
Fix integers $r>0$ and $d$ such that they are coprime.
Let $L$ be a line bundle of degree $d$
corresponding to an $\R$ rational point of the Picard scheme 
${\rm Pic}^d_{C/\R}$. Then the moduli 
space $\M_{r,L}$ is defined over $\R$. 
It is interesting to classify the birational types of these moduli 
spaces (for varying $L$) as varieties over $\R$. 
In view of \cite{Ho}, 
they are all rational if $C(\R)\neq \emptyset$.
In view of \cite{ksh} they are all rational after 
base change to $\C$.

We deal with the case when $C(\R)=\emptyset$. 
The main result we prove is the following. 
\begin{theorem}
Fix integers $r>0$ and $d$ such that they are coprime. 
Let $L$ be a line bundle on $C$ which
corresponds to an $\R$ point of ${\rm Pic}^d_{C/\R}$. 
\begin{enumerate}
	\item The following are equivalent.
	\begin{enumerate}
	 \item The moduli space $\M_{r,L}$ is rational as a variety 
		over $\R$
	\item $\M_{r,L}(\R)\neq \emptyset$
	\item $r$ is odd.
	\end{enumerate}
	\item Let $r$ be even. Then $\M_{r,L}(\R)=\emptyset$ and 
		the varieties $\M_{r,L}$, for varying $L$, are isomorphic to each other 
		as varieties over $\R$.
\end{enumerate}
\end{theorem}

{\bf Acknowledgements}: We thank the referee for pointing to us
\cite{Ho}. We were not aware of this work and  
an earlier version of this article contained results which were 
already known due to \cite{Ho}. The second author was 
partially supported by a DST-INSPIRE grant. This 
research was supported in part by the International 
Centre for Theoretical Sciences (ICTS) during a 
visit for participating in the program - 
Complex Algebraic Geometry (Code: ICTS/cag/2018/10)

\section{Main Results}

Let $C$ be a smooth projective algebraic curve over $\R$. 
Let $\CC:=C \times_{\R} \C$. Let $g$ denote the genus of $C$ 
and assume that $g\geq 2$. Complex conjugation induces an 
involution $\sigma:\CC \to \CC$. Coherent $\OO_{C}$-modules 
give rise to coherent $\OO_{\CC}$-modules with an involution. 
More precisely, for a coherent $\OO_{C}$-module $\F_0$, 
let $\F:=\F_0 \otimes_{\R} \C$ be the corresponding $\OO_{\CC}$-module. 
Then we have an isomorphism $\delta: \F \to \sigma^*\F$ 
satisfying $\sigma^*\delta \circ \delta = {\rm Id}$.

\begin{remark}\label{rem1}
Converse to the above, let $X$ be a variety defined over $\R$ and let 
$X_\C:=X\times_\R\C$. If there is a quasi-coherent
sheaf $\F$ on $X_\C$ and an isomorphism $\delta:\F\to \sigma^*\F$
satisfying $\sigma^*\delta \circ \delta = {\rm Id}$,
then it is easily checked that there is a quasi-coherent
sheaf $\F_0$ on $X$ such that $\F\cong \F_0\otimes_\R\C$.
\end{remark}

\begin{definition}\label{def-real-quat}
Bundles $\F$
over $\CC$ which are of the type $\F_0\otimes_\R \C$ will be 
called $\R$ bundles. 
Bundles $\F$ over $\CC$ with an isomorphism 
$\delta:\F\to \sigma^*\F$ such that $\sigma^*\delta\circ \delta=-{\rm Id}$
will be called quaternionic bundles. 
\end{definition}
The involution $\sigma : \CC \to \CC$ induces an involution 
$\tilde{\sigma} : \M_{r, d} \to \M_{r,d}$. 
This is given on $\C$ points by $[E]\mapsto [\sigma^*E]$.

We now state a few known results along with proofs
so as to make this article self-contained.

\begin{proposition}\label{realpoint}{(\cite[Proposition 3.1]{biswas-huisman-hurtubise})}
An $\R$ rational point in the moduli space of stable bundles corresponds 
to an $\R$ bundle or quaternionic bundle. 
\end{proposition}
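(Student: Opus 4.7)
The plan is to unwrap the definition of an $\R$ rational point of $\M_{r,d}$. Since $\M_{r,d}$ coarsely represents stable bundles on $\CC$ and is defined over $\R$, an $\R$ point is a $\C$ point $[E]$ fixed by $\tilde\sigma:[E]\mapsto[\sigma^*E]$. This yields a stable bundle $E$ on $\CC$ together with an isomorphism $\delta:E\to \sigma^*E$. The goal is to modify $\delta$ so that $\sigma^*\delta\circ\delta=\pm{\rm Id}_E$: the $+$ case produces an $\R$ bundle via Remark \ref{rem1}, while the $-$ case is by definition a quaternionic bundle.

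The key computation uses stability. Since $\Hom(E,E)=\C$, the automorphism $\sigma^*\delta\circ\delta:E\to\sigma^*\sigma^*E=E$ equals $\lambda\cdot{\rm Id}_E$ for some $\lambda\in\C^*$. To see that $\lambda\in\R^*$, I would apply the functor $\sigma^*$ to the equation $\sigma^*\delta\circ\delta=\lambda\cdot{\rm Id}_E$. Because $\sigma$ acts on $\Gamma(\OO_{\CC})=\C$ by complex conjugation, the induced action of $\sigma^*$ on morphism spaces is conjugate $\C$-linear, so the equation transforms into $\delta\circ\sigma^*\delta=\bar\lambda\cdot{\rm Id}_{\sigma^*E}$. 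Composing on the right with $\delta$ and comparing with the original relation gives $\lambda\,\delta=\bar\lambda\,\delta$, whence $\lambda=\bar\lambda$.

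To normalize, I would rescale $\delta\mapsto\mu\delta$ for $\mu\in\C^*$. The same conjugate linearity yields
\[
\sigma^*(\mu\delta)\circ(\mu\delta)=\bar\mu\cdot\mu\cdot\sigma^*\delta\circ\delta=|\mu|^2\lambda\cdot{\rm Id}_E,
\]
so choosing $|\mu|^2=1/|\lambda|$ reduces $\lambda$ to $\pm 1$, the sign being that of the original $\lambda$. In the $+$ case, Remark \ref{rem1} gives a descent of $E$ to an $\R$ bundle; in the $-$ case we obtain a quaternionic bundle directly from Definition \ref{def-real-quat}.

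The main subtlety I expect to encounter is tracking the conjugate $\C$-linearity of $\sigma^*$ on $\Hom$-spaces. This is what simultaneously forces $\lambda$ to be real (rather than an arbitrary nonzero scalar) and restricts the rescaling freedom to positive real factors (so that the sign of $\lambda$ is a genuine invariant of $[E]$, distinguishing the real and quaternionic cases). Once that subtlety is cleanly set up, everything else is a short formal manipulation.
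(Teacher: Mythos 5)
Your proof is correct and follows essentially the same route as the paper: stability forces $\sigma^*\delta\circ\delta=\lambda\cdot{\rm Id}$, conjugate-linearity of $\sigma^*$ forces $\lambda\in\R$, and rescaling by a positive real normalizes $\lambda$ to $\pm1$. The only difference is presentational: the paper verifies the reality of $\lambda$ by passing to a local trivialization and computing with the matrix $T$ of $\delta$, whereas you carry out the same computation functorially on $\Hom$-spaces.
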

\begin{proof}
Let $[E]\in \M_{r,d}$ be an $\R$ point. Then since $\tilde{\sigma}^*[E]=[E]$, 
there is an isomorphism $\delta:E\to \sigma^*E$. Let ${\rm Spec}\, A_0$ 
be an affine open subset of $C$ such that the restriction of $E$ 
to ${\rm Spec}\, A_0\otimes_\R\C$ is free. On this open subset the 
isomorphism $\delta$ can be represented by a $r\times r$ matrix 
with entries in $A:=A_0\otimes_\R\C$, denote this matrix by $T$. 
Since $E$ is stable, we have $\sigma^*\delta \circ \delta = \lambda\cdot{\rm Id}$
for some $\lambda\in \C^*$. On the affine open ${\rm Spec}\, A$ this 
implies that $\sigma(T)\cdot T=\lambda\cdot {\rm Id}$. Thus, 
we also have $T\cdot \sigma(T)=\lambda\cdot {\rm Id}$. Applying $\sigma$ 
to this equation we see that 
$$\sigma(T)\cdot T=\sigma(\lambda)\cdot {\rm Id}=\lambda \cdot {\rm Id}$$
that is, $\lambda\in \R$.
Scaling $\delta$ by $\sqrt{\vert\lambda\vert}$ we get that 
$\sigma^*\delta\circ \delta=\pm {\rm Id}$.
If $\sigma^*\delta\circ \delta={\rm Id}$ then by Remark 
\ref{rem1} $E$ is an $\R$ bundle. Otherwise it is a quaternionic
bundle.
\end{proof}

If $C(\R)=\emptyset$ then it may happen that an $\R$ point  
of the moduli space of stable bundles does not correspond to a $\R$ 
bundle on $C$. For example, take $C:={\rm Proj}(\R[x,y,z]/(x^2+y^2+z^2))$.
Then ${\rm Pic}^1_{C/\R}(\C)$ is just one 
point and so is forced to be an $\R$ point. 
This point corresponds to the line bundle $\OO(1)$
on $\tilde C$,
which is clearly not defined over $\R$.

\begin{proposition} \label{quater}{(\cite[Proposition 4.3]{biswas-huisman-hurtubise})}
Let $E$ be a quaternionic bundle on $\CC$ of rank $r$ and 
degree $d$. Then $d+r(1-g)$ is even. 
\end{proposition}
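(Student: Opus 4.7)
The plan is to invoke Riemann--Roch, which gives $d + r(1-g) = \chi(E) = h^0(\CC, E) - h^1(\CC, E)$, and then to show that each of $h^0(\CC, E)$ and $h^1(\CC, E)$ is even by transferring the quaternionic structure on $E$ to a quaternionic structure on each cohomology group.

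For the transfer, note that $\sigma$ is an $\R$-automorphism of $\CC$ acting on $H^0(\CC, \OO_{\CC}) = \C$ by complex conjugation. Hence pullback along $\sigma$ gives, for every quasi-coherent sheaf $\F$ on $\CC$, a natural $\C$-antilinear isomorphism $\tilde\sigma_\F \colon H^i(\CC, \F) \to H^i(\CC, \sigma^*\F)$ satisfying $\tilde\sigma_{\sigma^*\F} \circ \tilde\sigma_\F = {\rm Id}$, since $\sigma^2 = {\rm Id}$. Define $J := \tilde\sigma_E^{-1} \circ H^i(\delta) \colon H^i(\CC, E) \to H^i(\CC, E)$; this is $\C$-antilinear as the composition of a $\C$-linear with a $\C$-antilinear map. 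Naturality of $\tilde\sigma$ applied to $\delta$ gives $\tilde\sigma_{\sigma^*E} \circ H^i(\delta) = H^i(\sigma^*\delta) \circ \tilde\sigma_E$; combining this with $\tilde\sigma_{\sigma^*E} \circ \tilde\sigma_E = {\rm Id}$ and the hypothesis $\sigma^*\delta \circ \delta = -{\rm Id}_E$, a short diagram chase yields $J^2 = -{\rm Id}$.

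Finally, any complex vector space admitting a $\C$-antilinear endomorphism squaring to $-{\rm Id}$ is canonically a module over the quaternion algebra $\R\langle i, j\rangle/(i^2 = j^2 = -1,\ ij = -ji)$ (with $i$ acting as the complex structure and $j$ as the antilinear endomorphism); since this skew field has complex dimension $2$, every such module has even complex dimension. Applied to $H^0(\CC, E)$ and $H^1(\CC, E)$, this makes both $h^0(\CC, E)$ and $h^1(\CC, E)$ even, so their difference $d + r(1-g)$ is even. The main delicate point will be the sign computation $J^2 = -{\rm Id}$, which requires careful bookkeeping of how the antilinear map $\tilde\sigma$ interacts with the linear map $H^i(\delta)$; the rest is formal once the framework is set up.
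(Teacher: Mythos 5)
Your proposal is correct and follows essentially the same route as the paper: Riemann--Roch reduces the claim to showing $h^0(\CC,E)$ and $h^1(\CC,E)$ are even, which both arguments get by transferring $\delta$ to a $\C$-antilinear endomorphism of cohomology squaring to $-{\rm Id}$ and invoking the resulting quaternionic vector space structure. You simply spell out the naturality and sign bookkeeping that the paper leaves implicit.
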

\begin{proof}
The isomorphism $\delta$ on $E$ induces an isomorphism $\delta^*$ on 
$H^0(\CC, E)$.
Then $\delta^*$ is complex antilinear and $\delta^{*}\circ \delta^* = -{\rm Id}$. 
From this it is easy to see that  $H^0(\CC, E)$ is even 
dimensional as a $\C$-vector space. Similarly  $H^1(\CC, E)$ 
is also even dimensional. The proposition now follows
from Riemann-Roch. 
\end{proof}

\subsection{{\bf $C(\R)=\emptyset$}}.\\\\
In what follows we consider the situation when $C(\R)=\emptyset$. 
As before $L$ corresponds to an $\R$ point of ${\rm Pic}^d_{C/\R}$. 
We also assume that ${\rm gcd}(r,d)=1$.
We emphasize that an $\R$ point could correspond to an $\R$ bundle 
or a quaternionic bundle. 

\begin{proposition} \label{even}{(\cite[Proposition 4.2]{biswas-huisman-hurtubise})}
Every $\R$ line bundle on $C$ is of even degree.
\end{proposition}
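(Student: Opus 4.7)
The plan is to reduce the statement to the elementary observation that, under the hypothesis $C(\R)=\emptyset$, every closed point of the $\R$-scheme $C$ has residue field $\C$ and therefore contributes degree two to any divisor in which it appears.

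First I would use that every $\R$ line bundle $\LL=L_0\otimes_\R\C$ on $\CC$ comes from an honest line bundle $L_0$ on $C$, and that the degree of $\LL$ on $\CC$ equals the intrinsic degree of $L_0$ on $C$, namely $\sum_p n_p[\kappa(p):\R]$ when $L_0\cong \OO_C(\sum_p n_p[p])$. This comparison uses that $\CC\to C$ is finite flat of degree two and that a closed point $p$ with $\kappa(p)=\C$ pulls back to the pair of conjugate $\C$-points of $\CC$ lying above it, contributing $2$ in either formulation. To write $L_0$ as $\OO_C(D)$ in the first place, I would invoke the standard fact that for a smooth projective curve over any field the map $\mathrm{Div}(C)\to\mathrm{Pic}(C)$ is surjective; one can, for example, tensor $L_0$ with a large multiple of $\OO_C(p_0)$ for a fixed closed point $p_0$ to force a non-zero global section via Riemann--Roch.

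Next I would classify the closed points. Since the only finite extensions of $\R$ are $\R$ and $\C$, every closed point $p\in C$ has $\kappa(p)\in\{\R,\C\}$. Points with $\kappa(p)=\R$ are precisely the elements of $C(\R)$, of which there are none by hypothesis, so $[\kappa(p):\R]=2$ for every closed point. Hence for any divisor $D=\sum_p n_p[p]$ on $C$ we have $\deg(\LL)=\sum_p n_p\cdot 2=2\sum_p n_p$, which is even. There is no substantive obstacle in this argument beyond the bookkeeping around the comparison of degrees on $C$ and $\CC$; the hypothesis $C(\R)=\emptyset$ is doing all the real work by forcing every residue field extension to be $\C/\R$.
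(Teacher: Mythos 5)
Your argument is correct and is essentially the paper's own proof in a different language: the paper produces a global section of (a twist of) $L_0$ defined over $\R$ and observes that its zero divisor on $\CC$ is $\sigma$-invariant, hence a union of conjugate pairs $\{p,\sigma(p)\}$ with $p\neq\sigma(p)$ since $C(\R)=\emptyset$ --- which is precisely your statement that every closed point of $C$ has residue field $\C$ and so contributes $2$ to the degree. Even the auxiliary device (twisting by a large power of a degree-two bundle to force a section via Riemann--Roch) is the same, so there is nothing to add.
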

\begin{proof}
Let $L$ be an $\R$ bundle of rank one. Let $p$ be a 
$\C$ point of $C$. Consider the $\R$ bundle 
$L_0=\OO(p+\sigma(p))$. If necessary, after twisting 
by a sufficiently large power of $L_0$, we may assume 
that $L$ has a global section $s$, which is defined over $\R$. 
Since $\sigma^*s=s$, 
the divisor corresponding to this section is invariant under
the action of $\sigma$. This shows that degree of this divisor 
is even (because the components of the divisor come in 
pairs $\{p,\sigma(p)\}$ with $p\neq \sigma(p)$), and so the degree of $L$ is even.
\end{proof}

\begin{theorem}\label{main-R2}
Let $L$ be an $\R$ point of ${\rm Pic}^d_{C/\R}$ and 
assume that $\M_{r,L}$ has an $\R$ rational point. Then $\M_{r,L}$ 
is rational as a variety over $\R$. 
\end{theorem}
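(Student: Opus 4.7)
The plan is to adapt the strategy of Hoffmann \cite{Ho}, whose Theorem 6.1 establishes rationality of $\M_{r,L}$ over an infinite base field $k$ provided that $C$ has a $k$-rational point and $\gcd(r,d)=1$. In our setting $k=\R$ and $C(\R)=\emptyset$, so Hoffmann's theorem does not apply directly. However, by Proposition \ref{realpoint} the hypothesis $\M_{r,L}(\R)\neq \emptyset$ gives us an $\R$ bundle or quaternionic bundle $E_0$ on $\CC$ of rank $r$ with $\det E_0\cong L$, and this object is the natural substitute for the missing $\R$-rational point on $C$.

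First I would isolate where Hoffmann uses the $k$-rational point $p\in C(k)$. It typically enters by restricting a universal family $\U$ on $C\times \M_{r,L}$ to $\{p\}\times \M_{r,L}$, producing a rank $r$ vector bundle on $\M_{r,L}$ that rigidifies a birational parametrization by affine space.

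Second, I would observe that $E_0$ can play the role of $p$. In place of restriction to $\{p\}\times \M_{r,L}$, one takes the pushforward (and higher direct images) under $\pi_2:\CC\times \M_{r,L}\to \M_{r,L}$ of $\mathcal{H}om(\pi_1^*E_0,\U)$. Because $E_0$ carries a $\sigma$-equivariant isomorphism $\delta$, the resulting coherent sheaves on $\M_{r,L}\otimes_\R\C$ inherit a descent datum and hence an $\R$-structure by Remark \ref{rem1}. Running Hoffmann's construction with restriction at $p$ replaced by this pushforward should produce a birational map $\M_{r,L}\dashrightarrow \mathbb{A}^N_\R$.

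The main obstacle will be the quaternionic case, where $\sigma^*\delta\circ\delta=-{\rm Id}$. Several intermediate schemes appearing in the construction will then naturally be twisted forms (typically Brauer--Severi varieties) rather than their trivial versions, and one must check that these twists remain $\R$-rational. Since ${\rm Br}(\R)=\Z/2\Z$ is generated by the class of the quaternion algebra and $\gcd(r,d)=1$, a parity argument in the spirit of Propositions \ref{quater} and \ref{even} should trivialize the relevant Brauer classes, completing the argument.
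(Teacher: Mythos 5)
Your proposal takes a genuinely different route from the paper, and as written it contains real gaps rather than just omitted routine details. For comparison, the paper's proof never touches Hoffmann's construction: it takes the King--Schofield birational map $\phi:X_\C\dashrightarrow\P^n$ (rationality over $\C$ being already known), sets $M=\phi^*\OO(1)$, and descends $M$ to $\R$. Since ${\rm Pic}(X_\C)\cong\Z$ and $\sigma$ fixes the ample generator, $\sigma^*M\cong M$, so one can choose $\delta:M\to\sigma^*M$ with $\sigma^*\delta\circ\delta=\pm{\rm Id}$; the sign $-{\rm Id}$ is ruled out by evaluating on the fibre over a $\sigma$-fixed point of $X_\C$ --- this is the one and only place the hypothesis $\M_{r,L}(\R)\neq\emptyset$ enters --- whence $M$ descends to a line bundle $M_0$ on $X$ and $\phi$ descends to a birational map $X\dashrightarrow\P{\rm H}^0(X,M_0)$ over $\R$. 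That argument is about five lines and uses nothing beyond ${\rm Pic}(X_\C)\cong\Z$.

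The concrete gaps in your plan are these. First, the universal family $\U$ on $\CC\times\M_{r,L}$ is unique only up to twisting by a line bundle pulled back from $\M_{r,L}$, so a priori $\sigma^*\U\cong\U\otimes\pi_2^*N$ for some nontrivial $N$; the descent datum you want on $R^1\pi_{2*}\,\mathcal{H}om(\pi_1^*E_0,\U)$ does not follow from the $\sigma$-equivariance of $E_0$ alone, and controlling $N$ is essentially the same Brauer-class problem you postpone to the last step. Second, ``restricting $\U$ to $\{p\}\times\M_{r,L}$'' is not an accurate description of how Hoffmann uses the rational point: his argument proceeds through moduli of bundles with parabolic structure at $p$ and Hecke modifications at $p$, which are local operations at a point of the curve, and there is no evident way to perform them ``at the bundle $E_0$''; so the assertion that $E_0$ can play the role of $p$ is an analogy, not a substitution that can be carried out line by line. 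Third, the decisive step --- that a parity argument ``should trivialize the relevant Brauer classes'' --- is exactly the crux and is left unproven; any such computation must use $\M_{r,L}(\R)\neq\emptyset$ in an essential way (when that hypothesis fails, e.g.\ for $r$ even, the obstruction is genuinely nonzero by Proposition \ref{no-real-points-prop}), and your sketch does not indicate where or how it enters. I would recommend replacing the plan by the Picard-group descent argument above.
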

\begin{proof}
Let $X:=\M_{r,L}$ denote the moduli space and let $X_\C$ 
denote $X\times_\R\C$. Let $\phi:X_\C\dashrightarrow \P^n$
denote a rational map which is birational, which exists by \cite{ksh}. 
Let $Z$ denote the degeneracy locus of $\phi$. Then 
${\rm codim}_{X_\C}(Z)\geq 2$. Let $U:=X_\C\setminus (Z\cup \sigma(Z))$.
Restricting line bundles gives an isomorphism 
${\rm Pic}(X_\C)\stackrel{\sim}{\longrightarrow} {\rm Pic}(U)$.

It is well known 
that ${\rm Pic}(X_\C)\cong \Z$.
The map $\sigma:X_\C\to X_\C$ induces 
an isomorphism ${\rm Pic}(X_\C)\stackrel{\sim}{\to}{\rm Pic}(X_\C)$
since $\sigma\circ\sigma={\rm Id}$. Since a line bundle with 
global sections gets mapped to a line bundle with global sections,
we see that the unique ample generator gets mapped to itself.

Letting $M$ denote $\phi^*\mathcal{O}(1)$, we have just seen
that $\sigma^*M\cong M$. Note that $M$ is a line bundle on all of $X_\C$. 
Thus, we may choose a 
$\delta:M\to \sigma^*M$ such that 
$\sigma^*\delta\circ \delta=\pm {\rm Id}$. If 
$\sigma^*\delta\circ \delta=-{\rm Id}$, then restricting 
to a point in $X_\C$ which is invariant under $\sigma$ (such a point
obviously corresponds to an $\R$ rational point of $X_\R$), we get a contradiction. Thus, 
$\sigma^*\delta\circ \delta={\rm Id}$, and so there is 
a line bundle $M_0$ on $X$ such that $M=M_0\otimes_\R\C$.
Thus, we get a rational map $\phi_\R:X\dashrightarrow \P {\rm H}^0(X,M_0)$
which is birational.
\end{proof}

\begin{corollary}\label{cor-reallinebundle}
Let $L$ be an $\R$ line bundle on $C$. Then $\M_{r,L}$ 
is rational as a variety over $\R$.
\end{corollary}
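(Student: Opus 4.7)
The plan is to reduce to Theorem~\ref{main-R2}: it suffices to exhibit a single $\R$-rational point of $\M_{r,L}$, and then rationality of $\M_{r,L}$ over $\R$ follows immediately.

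The first move is a parity observation. Since $L$ is an $\R$-line bundle, Proposition~\ref{even} forces $\deg L$ to be even, and combined with the hypothesis $\gcd(r,\deg L)=1$ this forces $r$ to be odd. So the real task is to produce a stable $\R$-bundle on $C$ of rank $r$ with determinant $L$: such a bundle, base-changed to $\CC$, gives a $\sigma$-invariant $\C$-point of $\M_{r,L}$, hence (by Remark~\ref{rem1} together with Proposition~\ref{realpoint}) an $\R$-rational point.

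For the construction I would start from the manifestly unstable $\R$-bundle $F_0 = L \oplus \OO_C^{\oplus(r-1)}$, which already has the right rank and determinant, and then modify it inside the class of $\R$-bundles with fixed determinant $L$ until the result lands in the stable locus. Two natural mechanisms are available. First, a sequence of Hecke modifications performed simultaneously at pairs $\{p,\sigma(p)\}\subset \CC$ of conjugate points, with the up- and down-modifications balanced so as to preserve the determinant, keeps the $\R$-structure manifest throughout. Second, forming a generic real extension $0 \to F_1 \to E \to F_2 \to 0$ of $\R$-bundles whose determinants multiply to $L$, with ranks chosen so that a slope/cohomology estimate on a curve of genus $g\ge 2$ forces a generic extension to be stable. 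Either route reduces the question to the observation that a Zariski-open nonempty subset of a real vector space --- of modification data, respectively of extension classes in $\mathrm{Ext}^1_{\OO_C}(F_2,F_1)$ --- has real points, which is automatic.

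The main obstacle will be the genericity/stability estimate. One must pick the ranks, degrees, and determinants of $F_1,F_2$ (or the locations of the modification points) so that a generic extension class actually yields a bundle with no destabilizing subsheaf; this is a Riemann--Roch computation on $C$ that genuinely uses $g\ge 2$, and is the only step that requires care. Everything else --- the parity argument, the identification of $\R$-bundles with $\R$-rational points of the moduli space, and the final invocation of Theorem~\ref{main-R2} --- is formal once the construction is in place.
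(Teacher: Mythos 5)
Your proposal is correct and takes essentially the same route as the paper: the paper's proof consists of the dominant rational map $\P {\rm H}^1(C,L^\vee)^{\oplus(r-1)}\dashrightarrow \M_{r,L}$, which is precisely your second mechanism (generic real extensions $0\to\OO_C^{\oplus(r-1)}\to E\to L\to 0$ of $\R$-bundles with determinant $L$), used to conclude that $\M_{r,L}(\R)$ is dense, hence nonempty, before invoking Theorem~\ref{main-R2}. The parity remark and the Hecke-modification alternative are superfluous, and the paper, like you, states the genericity/stability of such extensions without carrying out the estimate.
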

\begin{proof}
There is a dominant rational map of $\R$ varieties 
\[\P {\rm H}^1(C,L^\vee)^{\oplus (r-1)}\dashrightarrow \M_{r,L}\]
This shows that the set of $\R$ points in $\M_{r,L}$ 
is Zariski dense. The corollary now follows from the 
preceding theorem.
\end{proof}

\begin{proposition}\label{no-real-points}
Let $r$ be odd. 
Let $L$ be a line bundle corresponding to an $\R$ rational point of ${\rm Pic}^d_{C/\R}$.
The moduli space $\M_{r,L}$ is rational as a variety over $\R$.
\end{proposition}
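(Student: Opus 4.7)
The plan is to split into two cases according to whether the $\R$ point $L$ of ${\rm Pic}^d_{C/\R}$ corresponds to an $\R$ line bundle or a quaternionic line bundle on $\CC$; these exhaust the possibilities by Proposition \ref{realpoint} applied in rank one (equivalently, by inspection of $\sigma^{*}\delta\circ\delta=\pm{\rm Id}$ for any isomorphism $\delta\colon L\to\sigma^{*}L$). In the $\R$ line bundle case Corollary \ref{cor-reallinebundle} gives the conclusion at once, so the substantive case is when $L$ is quaternionic. I would handle that case by exhibiting an explicit $\R$ rational point of $\M_{r,L}$ and then invoking Theorem \ref{main-R2}.

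The construction is to twist an $\R$ bundle by $L$. Since $r$ is odd, $1-r$ is even, and the tensor power $L^{1-r}$ is an $\R$ line bundle: the induced real structure $\delta_L^{\otimes(1-r)}$ satisfies $(\sigma^{*}\delta_L\circ\delta_L)^{\otimes(1-r)}=(-{\rm Id})^{1-r}={\rm Id}$. The degree $d(1-r)$ of $L^{1-r}$ remains coprime to $r$ because $\gcd(r,1-r)=1$, so Corollary \ref{cor-reallinebundle} applies to $\M_{r,L^{1-r}}$ and in particular produces an $\R$ rational point $[F]$.

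The step I expect to be the main obstacle is to argue that this $F$ is necessarily an $\R$ bundle rather than a quaternionic one, since a priori Proposition \ref{realpoint} leaves both possibilities open. The key observation is that a quaternionic bundle of odd rank has quaternionic determinant: in the matrix description from the proof of Proposition \ref{realpoint}, $\sigma(T)T=-{\rm Id}$ forces $\sigma(\det T)\det T=(-1)^{r}=-1$. The $\R$/quaternionic type of a $\sigma$ invariant line bundle is moreover an intrinsic invariant of its $\sigma$ equivariant isomorphism class: any two real structures on the same line bundle differ by a scalar $c\in\C^{*}$, and $c\sigma(c)=|c|^{2}>0$, so the sign of $\sigma^{*}\delta\circ\delta$ does not change. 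Since $L^{1-r}$ is of $\R$ type by the previous paragraph, $\det F$ cannot be quaternionic, whence $F$ is $\R$.

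Finally I would set $E:=F\otimes L$. Then $E$ is stable of rank $r$ (tensoring a stable bundle by a line bundle preserves stability), has determinant $\det F\otimes L^{r}=L^{1-r}\otimes L^{r}=L$, and is quaternionic since the real structure $\delta_F\otimes\delta_L$ satisfies $(\sigma^{*}\delta_F\circ\delta_F)\otimes(\sigma^{*}\delta_L\circ\delta_L)={\rm Id}\otimes(-{\rm Id})=-{\rm Id}$. Thus $[E]$ gives an $\R$ rational point of $\M_{r,L}$, and Theorem \ref{main-R2} completes the proof. Once the type bookkeeping of the previous paragraph is in place, this final twist is essentially tautological; indeed $F\mapsto F\otimes L$ induces a bijection $\M_{r,L^{1-r}}(\R)\cong\M_{r,L}(\R)$.
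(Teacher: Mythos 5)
Your proof is correct, and it shares the paper's key idea --- twisting by $L$ so that the determinant becomes an even tensor power of $L$, hence an $\R$ line bundle, at which point Corollary \ref{cor-reallinebundle} takes over --- but you execute it in a roundabout way. The paper simply observes that $E\mapsto E\otimes L$ gives an isomorphism $\M_{r,L}\to\M_{r,L^{\otimes(r+1)}}$ \emph{of varieties over $\R$} (it is $\sigma$-equivariant because $\sigma^*L\cong L$), and since $r+1$ is even the target is rational over $\R$ by Corollary \ref{cor-reallinebundle}; no case division and no appeal to Theorem \ref{main-R2} is needed. You instead apply the twist at the level of individual bundles: you produce an $\R$ point of $\M_{r,L}$ by transporting one from $\M_{r,L^{\otimes(1-r)}}$ and then invoke Theorem \ref{main-R2}. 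This forces you to prove the extra lemma that an odd-rank quaternionic bundle has quaternionic determinant and that the $\R$/quaternionic type of a line bundle is an invariant of its isomorphism class (both of which you argue correctly via $\sigma(\det T)\det T=(-1)^r$ and the $|c|^2>0$ computation). What your route buys is an explicit $\R$ rational point of $\M_{r,L}$ and the observation that it is necessarily quaternionic when $L$ is --- information the paper's one-line argument does not produce --- but for the statement at hand the paper's direct transfer of rationality along the $\R$-isomorphism is strictly shorter.
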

\begin{proof}
The isomorphism 
\[\M_{r,L} \to \M_{r,L^{\otimes(r+1)}}\]
given by $E\mapsto E\otimes L$, is defined over $\R$. 
Since $r$ is odd, the line bundle $L^{\otimes (r+1)}$ 
is an $\R$ bundle. The proposition follows 
from Corollary \ref{cor-reallinebundle}.
\end{proof}
 
\begin{proposition}\label{no-real-points-prop}
Let $r$ be even and $d$ be odd. 
Let $L$ be an $\R$ point of ${\rm Pic}^d_{C/\R}$.
Then $\M_{r,L}(\R)=\emptyset$ and so
it is not rational as a variety over $\R$.
\end{proposition}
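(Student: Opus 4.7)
The plan is to derive a contradiction in both of the two cases allowed by Proposition \ref{realpoint}, using the parity obstructions in Propositions \ref{even} and \ref{quater}, and then to observe that non-emptiness of real points is a necessary condition for rationality over $\R$.

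Suppose for contradiction that $[E] \in \M_{r,L}(\R)$. Since $E$ is stable, Proposition \ref{realpoint} tells us that $E$ is either an $\R$ bundle or a quaternionic bundle on $\CC$. In the first case, write $E = E_0 \otimes_\R \C$; then $L = \det E = (\det E_0) \otimes_\R \C$ is itself an $\R$ line bundle. Proposition \ref{even} then forces $d = \deg L$ to be even, contradicting the hypothesis that $d$ is odd. In the second case, Proposition \ref{quater} gives that $d + r(1-g)$ is even; since $r$ is even, so is $r(1-g)$, and hence $d$ must be even, again a contradiction. Therefore $\M_{r,L}(\R) = \emptyset$.

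For the non-rationality conclusion, I would use the easy direction of the rationality--real-points relationship. If $\M_{r,L}$ were birational to $\P^n$ over $\R$, there would be dense Zariski open subsets $U \subseteq \M_{r,L}$ and $V \subseteq \P^n_\R$ together with an $\R$-isomorphism $U \cong V$. Since $\P^n(\R)$ is Zariski dense in $\P^n_\R$, the open set $V$ contains an $\R$-point, giving an $\R$-point of $U$ and hence of $\M_{r,L}$. This contradicts $\M_{r,L}(\R) = \emptyset$, so $\M_{r,L}$ cannot be rational over $\R$.

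There is no real obstacle here: the substantive content is packaged in the cited propositions, and the proof is a bookkeeping exercise in parity. The only point requiring a little care is to remember that taking the determinant commutes with base change from $\R$ to $\C$, so that an $\R$-bundle $E$ on $\CC$ automatically has $\det E$ an $\R$ line bundle, which is what lets Proposition \ref{even} apply in the first case.
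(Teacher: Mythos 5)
Your argument is correct and follows essentially the same route as the paper: a case split via Proposition \ref{realpoint} into the $\R$-bundle case (ruled out through the determinant and Proposition \ref{even}) and the quaternionic case (ruled out by the parity statement of Proposition \ref{quater} with $r$ even). The paper phrases the first case by observing that $L$ itself must be quaternionic, forcing $E$ to be quaternionic, but this is the same parity bookkeeping; your explicit justification of the final non-rationality step is a welcome addition that the paper leaves implicit.
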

\begin{proof}
Assume that $\M_{r,L}$ has an $\R$ point corresponding to a bundle $E$. 
Then ${\rm Pic}^d_{C/\R}$ 
also has an $\R$ point, which corresponds to a line bundle $L$. 
By Proposition \ref{even}, since $d$ is odd, $L$ must be quaternionic. 
Next note that if $E$ is an $\R$ bundle, then 
$L$ will also be an $\R$ bundle. Since $L$ is a quaternionic bundle,
the only possible $\R$ points in $\M_{r, L}$ correspond
to quaternionic bundles. By Proposition \ref{quater}, applied to 
$E$, which is quaternionic, we get $d+r(1-g)$ 
is even. Since $d$ is odd we see $r(1-g)$ is odd. 
This contradicts the hypothesis.
\end{proof}

The case $r$ is even and $d$ is even cannot arise 
since $r$ and $d$ are coprime.

Let $r$ be even and $d$ be odd so that $\M_{r,L}$ is not 
rational. It may happen that $\M_{r,L_1}$ is birational 
over $\R$ with $\M_{r,L_2}$. Let 
$G:={\rm Pic}^0_{C/\R}(\R)$ and let $G^0$ denote 
the connected component of the identity. Then 
$G/G^0$ is an abelian group of cardinality at most 2,
see \cite[Proposition 3.3]{Gross-Harris}.

\begin{proposition}\label{prop-birational-classes}
Let $r$ be even, $d$ be odd such that they are coprime. 
Then the $\M_{r,L}$ are isomorphic to each other as varieties over $\R$,
where $L$ varies over the $\R$ points in ${\rm Pic}^d_{C/\R}$.
\end{proposition}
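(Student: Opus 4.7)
The plan is to construct an explicit $\R$-isomorphism $\M_{r,L_1}\to \M_{r,L_2}$ by tensoring with a line bundle $N$ on $\CC$ whose isomorphism class is $\sigma$-invariant and which satisfies $N^{\otimes r}\cong P:=L_2\otimes L_1^{-1}$ on $\CC$. Given such an $N$, the morphism $f:E\mapsto E\otimes N$ is an isomorphism $\M_{r,L_1}\times_\R\C\to \M_{r,L_2}\times_\R\C$, and the relation $\sigma^*N\cong N$ shows that $f$ is $\tilde\sigma$-equivariant on isomorphism classes, hence descends to an $\R$-isomorphism of the moduli spaces.

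First I would collect the parity constraints. Since both $L_i$ are $\R$-points of ${\rm Pic}^d_{C/\R}$ with $d$ odd, Proposition \ref{even} implies neither can be an $\R$-bundle, and then Proposition \ref{realpoint} forces each to be quaternionic. Applying Proposition \ref{quater} to $L_1$ in rank one gives that $d+(1-g)$ is even, so $g$ must be even. Applying the same proposition in rank one and degree $0$ then shows that no quaternionic line bundle of degree $0$ can exist, because that would require $g$ to be odd. Hence $P$ is necessarily represented by an honest $\R$-line bundle on $C$, and in particular lies in the group $G:={\rm Pic}^0_{C/\R}(\R)$.

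To produce $N$, I would use that the identity component $G^0$ is a compact real torus and hence divisible, so the $r$-th power map is surjective on $G^0$. Provided $P\in G^0$, we may choose $N\in G^0\subseteq G$ with $N^r=P$ and the construction is complete. The main obstacle is therefore to confirm that $P\in G^0$: because $|G/G^0|\leq 2$ and $r$ is even, the image of the $r$-th power map on $G$ is contained in $G^0$, so if $P$ were to lie outside $G^0$, no $N\in G$ would work. Resolving this would require either a closer reading of \cite[Proposition 3.3]{Gross-Harris} to establish that $G$ is connected in our setting (namely $C(\R)=\emptyset$ and $g$ even), or a more intricate construction of the isomorphism that absorbs the resulting cohomological obstruction, which lives in $H^1({\rm Gal}(\C/\R),{\rm Pic}^0_{C/\R}[r])$.
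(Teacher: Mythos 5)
Your proposal follows the same route as the paper: reduce to finding an $r$-th root $N$ of $P=L_2\otimes L_1^{-1}$ inside $G={\rm Pic}^0_{C/\R}(\R)$, after first checking (via Propositions \ref{even}, \ref{realpoint} and \ref{quater}) that $g$ is even and that degree-zero $\R$-points of the Picard scheme are honest $\R$-line bundles. Your parity analysis and the use of divisibility of the compact torus $G^0$ match the paper's argument, which phrases the same reduction as the vanishing of $G/rG$ via the commutative diagram comparing $[r]$ on $G^0$, $G$ and $G/G^0$.

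However, as written your proof has a genuine unresolved step: you correctly identify that everything hinges on $P\in G^0$ (equivalently, on $G$ being connected, since with $r$ even and $|G/G^0|\le 2$ the image of $[r]$ lands in $G^0$), but you stop there, offering two possible ways out without carrying either through. This is precisely the point where the paper invokes \cite[Proposition 3.3 (1)]{Gross-Harris}: for a real curve with $C(\R)=\emptyset$, the group ${\rm Pic}^0_{C/\R}(\R)$ is connected exactly when $g$ is even. Since you have already deduced that $g$ is even from the existence of a quaternionic line bundle of odd degree $d$, this citation closes the gap immediately; no descent or Galois-cohomological obstruction in $H^1({\rm Gal}(\C/\R),{\rm Pic}^0_{C/\R}[r])$ needs to be analyzed. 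So the missing ingredient is not a new idea but the specific content of the Gross--Harris result you名d only tentatively; without it the argument does not conclude, and with it the proof is complete and coincides with the paper's.
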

\begin{proof}
Let $L_1$ and $L_2$ be line bundles corresponding to 
$\R$ points in ${\rm Pic}^d_{C/\R}$.
Assume there is $M\in G$ such that 
$L_1^{-1}\otimes L_2\cong M^{\otimes r}$. 
Then the map $E\mapsto E\otimes M$ defines 
an isomorphism over $\R$ between $\M_{r,L_1}$ and 
$\M_{r,L_2}$. Thus, if $L^{-1}_1\otimes L_2$ is trivial 
in $G/rG$, then $M_{r,L_1}$ and $M_{r,L_2}$ are isomorphic. 
It suffices to show that $G/rG$ has cardinality 1.

From the diagram
\begin{equation}\label{e1}
\xymatrix{
	0\ar[r] & G^0\ar[d]^{[r]} \ar[r] & G\ar[d]^{[r]} \ar[r] & G/G^0\ar[r]\ar[d]^{[r]} &0\\
	0\ar[r] & G^0 \ar[r] & G \ar[r] & G/G^0\ar[r] &0
 }
\end{equation}
using the surjectivity of the left vertical arrow
we see that the cokernel of the middle vertical arrow
is isomorphic to the cokernel of the right vertical arrow.

Since $L_1$ corresponds to an $\R$ point of 
${\rm Pic}^d_{C/\R}$ and $d$ is odd, by Proposition \ref{even}
we see that $L$ is a quaternionic bundle. By Proposition 
\ref{quater}, $d+1-g$ is even, which forces 
that $g$ is even. 
By \cite[Proposition 3.3 (1)]{Gross-Harris} the cardinality of $G/G^0$ 
is 1. It follows that $G/rG$ is the trivial group. 
\end{proof}
The above results can be summarized into the following theorem.

\begin{theorem}
Fix integers $r>0$ and $d$ such that they are coprime. 
Let $L$ be a line bundle on $C$ which
corresponds to an $\R$ point of ${\rm Pic}^d_{C/\R}$. 
\begin{enumerate}
	\item The following are equivalent.
	\begin{enumerate}
	 \item The moduli space $\M_{r,L}$ is rational as a variety 
		over $\R$
	\item $\M_{r,L}(\R)\neq \emptyset$
	\item $r$ is odd.
	\end{enumerate}
	\item Let $r$ be even. Then $\M_{r,L}(\R)=\emptyset$ and 
		the varieties $\M_{r,L}$, for varying $L$, are isomorphic to each other 
		as varieties over $\R$.
\end{enumerate}
\end{theorem}
\begin{proof}
(1) (a) $\Longleftrightarrow$ (b) is Theorem \ref{main-R2}. (b) $\Longrightarrow$ (c) 
follows from Proposition \ref{no-real-points-prop}. (c) $\Longrightarrow$  (a) 
is Proposition \ref{no-real-points}.\\\\
(2) The first assertion is Proposition \ref{no-real-points-prop} and the second 
assertion is Proposition \ref{prop-birational-classes}.
\end{proof}

\end{document}